\theoremstyle{plain}
\newtheorem{theorem}{Theorem}[section]
\newtheorem{lemma}[theorem]{Lemma}
\theoremstyle{remark}
\newtheorem{remark}[theorem]{Remark}
\title [Reduction of bridge positions along a bridge disk]
{Reduction of bridge positions along a bridge disk}
\author[J. H. Lee]{Jung Hoon Lee}
\address{Department of Mathematics and Institute of Pure and Applied Mathematics,
Chonbuk National University, Jeonju 54896, Korea}
\email{junghoon@jbnu.ac.kr}
\begin{document}

\begin{abstract}
Suppose a knot in a $3$-manifold is in $n$-bridge position.
We consider a reduction of the knot along a bridge disk $D$ and
show that the result is an $(n-1)$-bridge position if and only if
there is a bridge disk $E$ such that $(D, E)$ is a cancelling pair.
We apply this to an unknot $K$, in $n$-bridge position with respect to a bridge sphere $S$ in the $3$-sphere,
to consider the relationship between a bridge disk $D$ and a disk in the $3$-sphere that $K$ bounds.
We show that if a reduction of $K$ along $D$ yields an $(n-1)$-bridge position,
then $K$ bounds a disk that contains $D$ as a subdisk and intersects $S$ in $n$ arcs.
\end{abstract}

\maketitle

\section{Introduction}\label{sec1}

Every closed orientable $3$-manifold $M$ can be decomposed into two handlebodies $V$ and $W$ with common boundary $S$
and it is called a {\em Heegaard splitting} of $M$, denoted by $M = V \cup_S W$.

For a Heegaard splitting $M = V \cup_S W$,
suppose that there are disks $D \subset V$ and $E \subset W$ such that $|D \cap E| = 1$.
Then the Heegaard splitting is said to be {\em stabilized}.
A stabilized Heegaard splitting can be destabilized.
By compressing $V$ along $D$ (or $W$ along $E$), we get a lower genus Heegaard splitting.
Conversely, it is known that if a compression of $V$ along an essential disk $D$
yields a lower genus Heegaard splitting,
then there is a disk $E$ in $W$ such that $|D \cap E| = 1$ \cite{Gordon}.

Let $K$ be a knot in $M$.
The notion of Heegaard splitting can be extended to the pair $(M, K)$.
For $M = V \cup_S W$, suppose that $V \cap K$ and $W \cap K$ are collections of $n$ boundary parallel arcs.
The decomposition $(M, K) = (V, V \cap K) \cup_S (W, W \cap K)$ is called a {\em bridge splitting} of $(M, K)$,
and we say that $K$ is in {\em $n$-bridge position} with respect to $S$.
Each arc of $V \cap K$ and $W \cap K$ is a {\em bridge}.
A bridge $a$ cobounds a {\em bridge disk $D$} with an arc $b$ in $S$,
i.e. $\partial D = a \cup b$ and $a \cap b = \partial a = \partial b$, such that $D \cap K = a$.

Let $a$ be a bridge in, say, $V$.
An isotopy of $a$ to $b$ along a bridge disk $D$ and further, slightly into $W$
will be called a {\em reduction} in this paper.
We consider when a knot is also in bridge position after a reduction.
Not all reductions result in bridge positions. See Figure \ref{fig1} for an example.
A bridge splitting is {\em perturbed} if there are bridge disks $D \subset V$ and $E \subset W$
such that $D$ intersects $E$ at one point of $K$.
We call $(D, E)$ a {\em cancelling pair}.
It is known that for a perturbed bridge splitting,
a reduction along $D$ (or $E$) results in a bridge position \cite[Lemma 3.1]{Scharlemann-Tomova}.
We show that the converse is also true.
This phenomenon is analogous to the case of Heegaard splitting.

\begin{theorem}\label{thm1}
Let $K$ be a knot in $n$-bridge position in a $3$-manifold $M$.
A reduction of $K$ along a bridge disk $D$ yields an $(n-1)$-bridge position
if and only if there is a bridge disk $E$ such that $(D, E)$ is a cancelling pair.
\end{theorem}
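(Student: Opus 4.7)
The forward direction ``$\Leftarrow$'' is exactly \cite[Lemma 3.1]{Scharlemann-Tomova}, so my plan focuses on proving ``$\Rightarrow$''. Write $\partial D = a \cup b$ with $a$ the $V$-bridge being reduced, $b \subset S$, and $\partial a = \{p, q\} \subset K \cap S$; let $\alpha_p$, $\alpha_q$ denote the $W$-bridges containing $p$ and $q$. The degenerate case $\alpha_p = \alpha_q$ forces $K$ to acquire a split circle component after the reduction and can be dismissed by a short separate argument, so I assume $\alpha_p \neq \alpha_q$. After the reduction, the arcs $\alpha_p, a', \alpha_q$, where $a' \subset W$ is the small push-off of $b$ into $W$, are amalgamated into a single arc $\gamma \subset W$ of the new knot. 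The hypothesis that the reduction yields an $(n-1)$-bridge position supplies a bridge disk $F \subset W$ for $\gamma$ with $\partial F = \gamma \cup \beta'$ and $\beta' \subset S$. The whole strategy is to use $F$ to produce a bridge disk $E \subset W$ for $\alpha_p$ whose shadow meets $b$ only at $p$.

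The first step is to un-push $F$: sliding the sub-arc $a' \subset \partial F$ back onto $b$ through the thin bigon between them, I obtain a disk $F^{\circ} \subset \overline{W}$ with $\partial F^{\circ} = \alpha_p \cup b \cup \alpha_q \cup \beta'$ and $F^{\circ} \cap S = b \cup \beta'$, all contained in the boundary. Among all such $F^{\circ}$, I choose one minimising $|b \cap \beta'|$ in $S$, which standard innermost/outermost arguments reduce to zero.

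Next, I cut $F^{\circ}$ along an embedded diagonal arc $\delta$ running from the corner $p$ to the corner $p'$, chosen close and parallel to the edge $\alpha_p$; this produces a thin strip $E_0 \subset F^{\circ}$ with $\partial E_0 = \alpha_p \cup \delta$ and $E_0 \cap K = \alpha_p$. Since $\delta$ is parallel to $\alpha_p$ inside $F^{\circ}$ (hence isotopic to $\alpha_p$ in $W$ rel endpoints) and $\alpha_p$ is itself boundary-parallel in $W$, the arc $\delta$ is boundary-parallel in $W$; therefore there is a disk $\Delta \subset W$ with $\partial \Delta = \delta \cup \beta$, where $\beta \subset S$ is an arc from $p$ to $p'$. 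Attaching $\Delta$ to $E_0$ along $\delta$ (after a small cut-and-paste to ensure embeddedness) yields a bridge disk $E := E_0 \cup \Delta$ for $\alpha_p$ with shadow $\beta$.

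The last and most delicate task is to arrange $\beta \cap b = \{p\}$, and this is where I expect the main obstacle. A priori $\beta$ may cross $b$ in the interior of $S$, and the construction above does not immediately control these crossings. My plan is to use an innermost-arc argument on the intersections of $\Delta$ with the original disk $D$, trading each excess crossing for a simpler configuration while relying on the minimality arranged for $|b \cap \beta'|$ and on the fact that $D \subset V$ is disjoint from the interior of $\Delta \subset W$. Once $\beta \cap b = \{p\}$ is established, $D \cap E = b \cap \beta = \{p\} \subset K$, so $(D, E)$ is the required cancelling pair.
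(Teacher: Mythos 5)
Your overall starting point is the same as the paper's (take a bridge disk $F$ for the amalgamated bridge $\alpha_p\cup a'\cup\alpha_q$ and reverse the reduction), but there are two genuine gaps, and the first one is precisely where the paper's main work lies. You assert that sliding $a'$ back onto $b$ ``through the thin bigon between them'' produces an embedded disk $F^{\circ}\subset\overline{W}$ with $F^{\circ}\cap S=b\cup\beta'$ contained in its boundary. This is not justified: the interior of $F$ may meet that bigon (the rectangle $R$ swept out in $W$) in arcs running from one vertical side to the other, i.e.\ $F$ may spiral or wrap around the reduction region, and such arcs cannot be removed by isotopy. When the reduction is reversed, each of them forces a portion of $F$ to be pushed into $V$ as a ``lower cap,'' so $F^{\circ}$ is in general a $k$-punctured disk, not a disk. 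Proving $k=0$ is the bulk of the paper's argument (its Lemmas 2.2--2.4): it requires introducing a bridge disk $E$ for an adjacent $W$-bridge, minimizing $|F^{\circ}\cap E|$, and running a labelled outermost-arc analysis. Your proposal assumes this conclusion at the outset. (A smaller unjustified claim in the same step: $|b\cap\beta'|$ cannot always be reduced to zero by innermost/outermost arguments, since a bigon between $b$ and $\beta'$ in $S$ may contain points of $K\cap S$.)

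The second gap is the one you flag yourself: arranging $\beta\cap b=\{p\}$. As written, the construction of $E=E_0\cup\Delta$ is essentially circular. The arc $\delta$ is parallel to $\alpha_p$ through $E_0$, so the natural parallelism disk $\Delta$ is obtained from $E_0$ together with \emph{some} bridge disk for $\alpha_p$, whose shadow is a priori uncontrolled relative to $b$; no progress has been made toward the intersection condition, and the ``innermost-arc argument on $\Delta\cap D$'' is only a hope ($D\subset V$ and $\Delta\subset W$ meet only along $S\cup K$, so it is unclear what that argument would operate on). The paper closes this step differently and completely: once $F^{\circ}$ is known to be a disk containing $D$ as a subdisk, it takes an arbitrary bridge disk $E$ for the adjacent bridge $e=\alpha_p$ chosen to minimize $|F^{\circ}\cap E|$, removes spiralling around $e$, and shows by an outermost-arc argument in $F^{\circ}$ that $F^{\circ}\cap E=e$, which forces $D\cap E$ to be the single point $p$. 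You would need to replace your diagonal-cutting step with an argument of this kind.
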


\begin{figure}[ht!]
\begin{center}
\includegraphics[width=8cm]{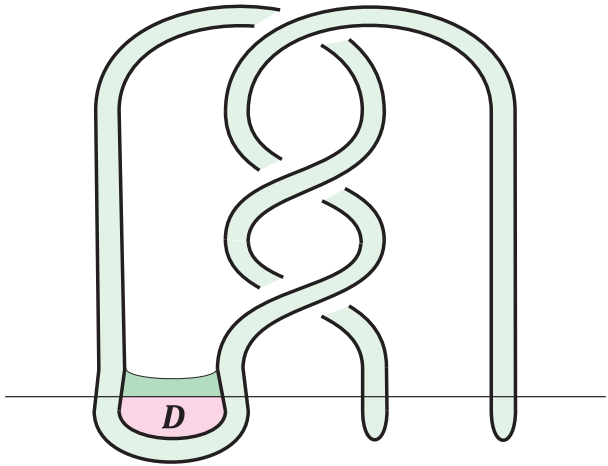}
\caption{A reduction that does not yield a bridge position}\label{fig1}
\end{center}
\end{figure}

We apply Theorem \ref{thm1} to an unknot in $S^3$.
An unknot $K$ in $n$-bridge position can be contained in a $2$-sphere $P$
that intersects the bridge sphere $S$ in a single loop \cite{Otal}, \cite{Ozawa}.
Hence each of the two disks that $K$ bounds in $P$ intersects $S$ in $n$ arcs.
We study when a disk intersecting $S$ in $n$ arcs contains a given bridge disk, and
show that the condition on the bridge disk as in Theorem \ref{thm1} guarantees it.
Such a disk containing a bridge disk might be useful to study a collection of disjoint bridge disks,
hence furthermore to study a collection of disjoint compressing disks.
However, we remark that the converse of Theorem \ref{thm2} does not hold (Figure \ref{fig1}).

\begin{theorem}\label{thm2}
Let $K$ be an unknot in $n$-bridge position in $S^3$.
If a reduction of $K$ along a bridge disk $D$ yields an $(n-1)$-bridge position,
then $K$ bounds a disk $F$ such that
\begin{itemize}
\item $F$ contains $D$ as a subdisk, and
\item $F$ intersects the bridge sphere in $n$ arcs.
\end{itemize}
\end{theorem}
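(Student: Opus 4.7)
The plan is to run the reduction backwards: obtain a disk $F'$ for the reduced $(n-1)$-bridge unknot $K'$ via the result of Otal and Ozawa, and then reattach along $K'$ the disk swept out by the reduction isotopy. By Theorem~\ref{thm1} the hypothesis is equivalent to the existence of a cancelling pair $(D,E)$, but the construction below only uses the fact that the reduction yields an $(n-1)$-bridge position.

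First I would fix notation. Let $\partial D = a\cup b$ with $a\subset V$ a bridge of $K$ and $b\subset S$, and let $\partial a=\{p_1,p_2\}$. Let $w_1,w_2\subset K\cap W$ be the two $W$-bridges of $K$ incident to $a$ at $p_1,p_2$. The reduction along $D$ replaces the subarc $w_1\cup a\cup w_2$ of $K$ by a smooth arc $\beta'\subset W$ that becomes a single bridge of $K'\cap W$. The isotopy from $K$ to $K'$ sweeps out a disk $\Delta = D\cup\delta'$, where $\delta'\subset\overline{W}$ is the thin sliver with $\partial\delta' = \beta'\cup(w_1\cup b\cup w_2)$; thus $\partial\Delta = a\cup w_1\cup\beta'\cup w_2$ and $\Delta\cap S = b$.

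Since $K'$ is an unknot in $(n-1)$-bridge position, by Otal and Ozawa $K'$ lies on a $2$-sphere $P'$ meeting $S$ in a single loop, and one of the two disks of $P'\setminus K'$ is a disk $F'$ with $\partial F'=K'$ and $F'\cap S$ consisting of $n-1$ arcs. I would then set $F := F'\cup\Delta$, glued along the common arc $\beta'$. This gives $\partial F = (K'\setminus\beta')\cup(a\cup w_1\cup w_2) = K$, $D\subset\Delta\subset F$ is a subdisk, and $F\cap S = (F'\cap S)\cup b$.

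The hard part will be arranging $F$ to be an embedded disk whose intersection with $S$ consists of \emph{exactly} $n$ arcs. The new arc $b$ has endpoints $p_1,p_2$ which are absent from $K'$ (they are smoothed away during the reduction), so $b\subset S\setminus K'$; by isotoping $P'$ while keeping $K'\subset P'$, the circle $P'\cap S$ can be pushed off $b$, making $b$ disjoint from the $n-1$ arcs of $F'\cap S$. Any interior intersections between $F'$ and $\Delta$ remaining after general position are eliminated by innermost-disk surgery inside the simply connected disk $\Delta$, which preserves $\partial F'=K'$ and the arc count $|F'\cap S|=n-1$. The main technical obstacle is executing both adjustments simultaneously without disturbing $D\subset\Delta$; once done, $F$ is an embedded disk with $\partial F = K$, $D$ a subdisk, and $|F\cap S|=n$.
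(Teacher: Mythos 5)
Your high-level strategy is the same as the paper's: take the Otal--Ozawa disk for the reduced $(n-1)$-bridge unknot and glue back the trace of the reduction isotopy. The two differences are that the paper first invokes Theorem~\ref{thm1} to replace the reduction along $D$ by the symmetric unperturbation along the cancelling pair $(D,E)$ (gluing back two swept regions $\Delta_1\cup\Delta_2$ and then flattening near the cancelling point to get the arc count right), whereas you glue back only the one region $\Delta$ swept along $D$. Your asymmetric version is not obviously wrong, and its arc count $(n-1)+1=n$ is in fact cleaner than the paper's, which initially produces the wrong count and needs a final local isotopy.

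The genuine gap is in the cleanup step, which you correctly identify as the hard part but then propose to handle with tools that do not work. First, innermost-disk surgery on $F'$ along subdisks of $\Delta$ does not preserve the property that $F'\cap S$ consists of $n-1$ arcs: $\Delta$ crosses $S$ along $b$, so an innermost circle of $F'\cap\Delta$ in $\Delta$ may bound a subdisk of $\Delta$ meeting $S$, and surgering $F'$ along it changes $F'\cap S$ (and surgery on a disk also produces a sphere component that must be discarded with care). Second, surgery says nothing about the \emph{arc} components of $F'\cap\Delta$, which are the real problem: $F'$ can spiral around the bridge $\beta'$, producing arcs of $F'\cap\Delta$ with endpoints in $\mathrm{int}\,\beta'$, and $\mathrm{int}\,F'$ can meet the rest of $\partial\Delta$, namely $a\cup w_1\cup w_2$, which lies on $K$ but not on $K'=\partial F'$. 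Third, the assertion that $P'\cap S$ can be pushed off $b$ while keeping $K'\subset P'$ is exactly the same difficulty restated: the $n-1$ arcs of $F'\cap S$ have fixed endpoints on $K'\cap S$, and removing their intersections with $b$ requires an ambient isotopy of the disk $F'$ that preserves the level structure. The paper's resolution of all three issues is a specific two-stage isotopy of the Otal--Ozawa disk: first unwind the spiraling so that no arc of intersection has an endpoint in the interior of the pushed-off bridge, and then move every remaining circle and arc of intersection \emph{horizontally} (level-preservingly) out of the swept region, which by construction keeps $F_0\cap S$ equal to $n-1$ arcs. Without some such level-preserving argument replacing your surgery step, the construction of $F$ is not complete.
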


In Section \ref{sec2}, we give a proof of Theorem \ref{thm1}.
In Section \ref{sec3}, we prove Theorem \ref{thm2} using a cancelling pair.

\section{Proof of Theorem \ref{thm1}}\label{sec2}

Let $(M, K) = (V, V \cap K) \cup_S (W, W \cap K)$ be a bridge splitting and let $K$ be in $n$-bridge position.
One implication that if $(D, E)$ is a cancelling pair,
then a reduction of $K$ along $D$ yields an $(n-1)$-bridge position
is well-known, e.g. \cite[Lemma 3.1]{Scharlemann-Tomova}, so we omit the proof here.

Now we go the other direction.
Suppose a reduction of $K$ along a bridge disk $D$ yields an $(n-1)$-bridge position.
We assume that $D$ is in $V$ without loss of generality.
Denote $D \cap K$ by $a$ and $D \cap S$ by $b$.
By a reduction along $D$, $a$ is isotoped past $b$ to an arc $a'$ in $W$.
Consider the rectangle $R$ with opposite sides $a'$ and $b$.
It is a part in $W$ of the region which $a$ swept out. 
See Figure \ref{fig2}.
We denote the other two sides of $R$ by $\alpha$ and $\beta$.

\begin{figure}[ht!]
\begin{center}
\includegraphics[width=12.5cm]{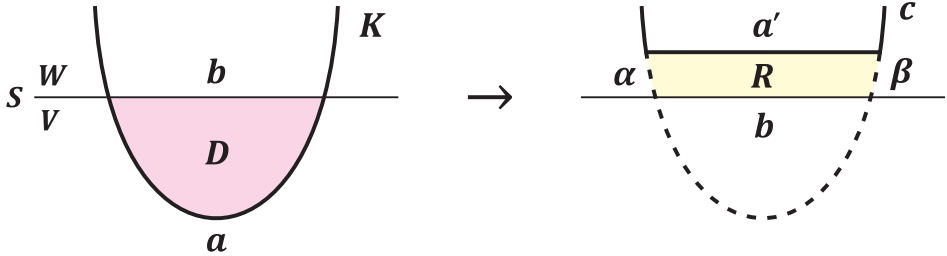}
\caption{A reduction along $D$}\label{fig2}
\end{center}
\end{figure}

The arc $a'$ is a subarc of a new bridge $c$.
We denote a bridge disk for $c$ by $C$ and let $d = C \cap S$.
The bridge disk $C$ and the rectangle $R$ have $a'$ in common and they may have other intersection.
Suppose that there exists an arc component of $R \cap C$ with an endpoint in $\mathrm{int} \, a'$.
This happens if, around the axis $a'$, $C$ spirals with respect to $R$.
Let $\gamma$ be such an arc with a point $x$ of $\partial \gamma$ closest to the point $y = a' \cap \alpha$.
We push a collar neighborhood of the line segment $\overline{xy}$ in $C$
to the opposite side of $R$ as in Figure \ref{fig3}.
Then $\gamma$ is changed into an arc with an endpoint in $\mathrm{int} \, \alpha$.
In this way, even if $C$ spirals more than $360^{\circ}$,
we isotope every arc component of $R \cap C$ with an endpoint in $\mathrm{int} \, a'$
so that the endpoint is not in $\mathrm{int} \, a'$.
Hence we may assume that there exists no arc component of $R \cap C$ with an endpoint in $\mathrm{int} \, a'$.
(In fact, we do not necessarily need to isotope an endpoint
into $\mathrm{int} \, \alpha$ (or $\mathrm{int} \, \beta$).
See Figure \ref{fig4} for an example.)

\begin{figure}[ht!]
\begin{center}
\includegraphics[width=12.5cm]{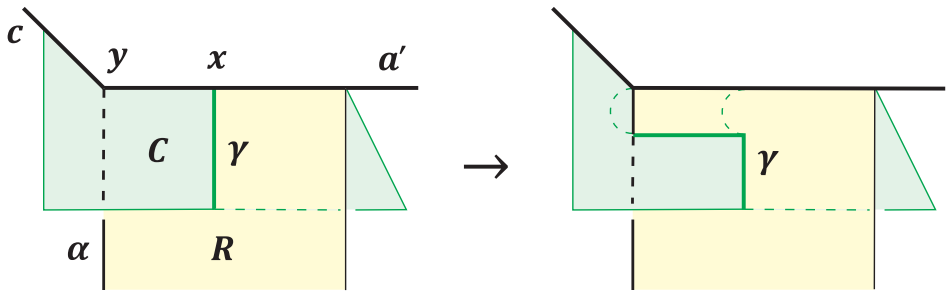}
\caption{An isotopy of $C$}\label{fig3}
\end{center}
\end{figure}

\begin{figure}[ht!]
\begin{center}
\includegraphics[width=12.5cm]{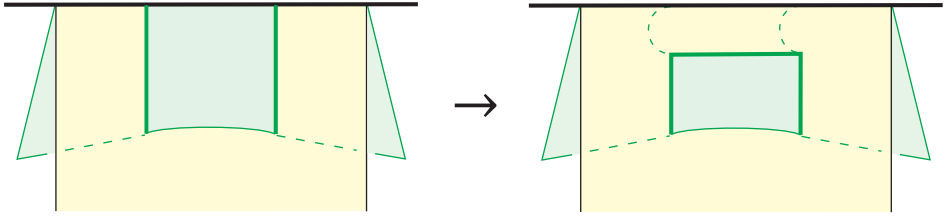}
\caption{Two arcs are merged into one.}\label{fig4}
\end{center}
\end{figure}

A circle component of $R \cap C$ can be removed out of $R$ by an isotopy passing through $\alpha$ or $\beta$.
An arc component of $R \cap C$ belongs to one of the six types below according to its endpoints.

\begin{enumerate}
\item Both endpoints are in $\alpha$.
\item Both endpoints are in $\beta$.
\item Both endpoints are in $b$.
\item One endpoint is in $\alpha$ and the other is in $b$.
\item One endpoint is in $\beta$ and the other is in $b$.
\item One endpoint is in $\alpha$ and the other is in $\beta$.
\end{enumerate}

Every arc of types $(1)$, $(2)$, $(3)$, $(4)$, $(5)$ can be removed out of $R$ by an isotopy (Figure \ref{fig5}),
but an arc of type $(6)$ cannot be removed.
So we may assume that $R \cap C$ consists of type $(6)$ arcs and
the number of components $k = |R \cap C| - 1$ is minimal. 
See Figure \ref{fig6}.
It will be shown that $k = 0$.
So for this purpose we suppose that $k > 0$.

\begin{figure}[ht!]
\begin{center}
\includegraphics[width=7cm]{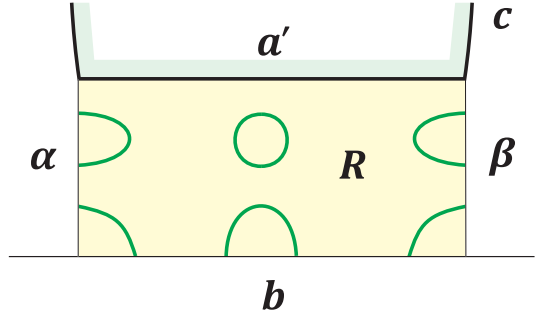}
\caption{Components of $R \cap C$ that can be removed}\label{fig5}
\end{center}
\end{figure}

\begin{figure}[ht!]
\begin{center}
\includegraphics[width=7cm]{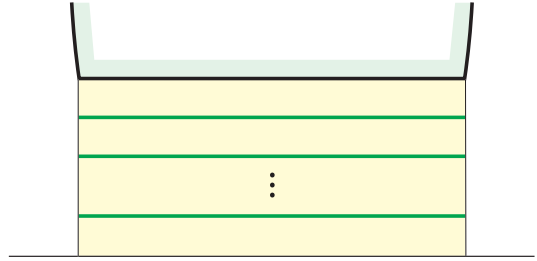}
\caption{The intersection $R \cap C$}\label{fig6}
\end{center}
\end{figure}

Now we reverse the reduction operation.
The arc $a'$ moves back to $\alpha \cup a \cup \beta$ along $R \cup D$ and the bridge disk $C$ moves together.
Of course, some neighborhoods of the $k$ arcs of $R \cap C$ are pushed into $V$,
producing $k$ parallel lower caps for $D$. 
See Figure \ref{fig7}.
After this isotopy, $C$ contains $D$ as a subdisk and $C \cap W$ is a $k$-punctured disk.
Let $C' = C \cap W$.

\begin{figure}[ht!]
\begin{center}
\includegraphics[width=7cm]{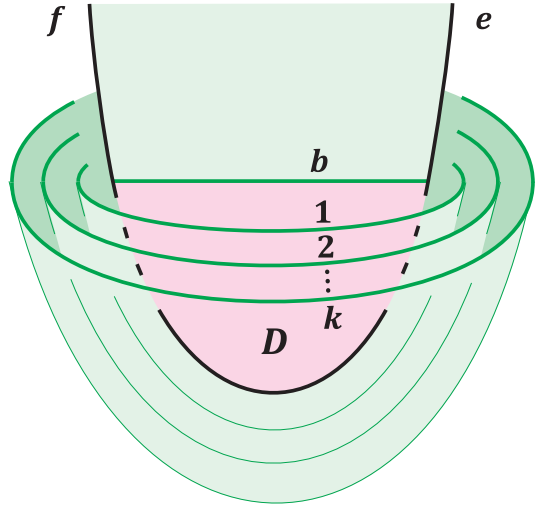}
\caption{The disk $C$ after reversing the reduction operation}\label{fig7}
\end{center}
\end{figure}

Let $e$ and $f$ be the two bridges in $W$
that are adjacent to the bridge $a$ in the original $n$-bridge position of $K$.
We denote a bridge disk for $e$ by $E$.
We choose $E$ such that $|C' \cap E|$ is minimal.

The component of $\partial C'$ which is not a puncture can be regarded as a $4$-gon
consisting of two bridges $e$ and $f$ and two arcs $b$ and $d$ in $S$.
We give label $1$ to the puncture of $C'$ which is closest to $b$ in $S$, and
label consecutively the other nested punctures in $S$ by $2, \ldots, k$.
See Figure \ref{fig7} and Figure \ref{fig8}.

\begin{figure}[ht!]
\begin{center}
\includegraphics[width=7.5cm]{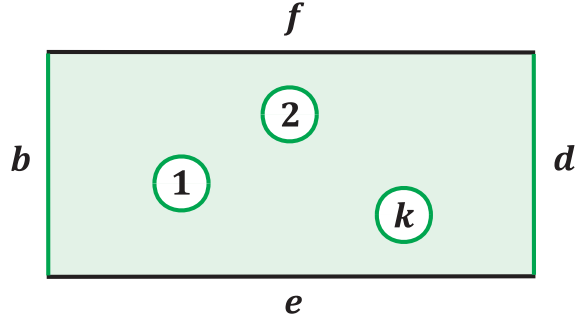}
\caption{The $k$-punctured disk $C'$}\label{fig8}
\end{center}
\end{figure}

If $E$ spirals with respect to $C'$ around the axis $e$,
then as before we isotope an endpoint of an arc
of $C' \cap E$ in $\mathrm{int} \, e$ into $\mathrm{int} \, d$.
See Figure \ref{fig9}.
So we may assume that there is no arc component of $C' \cap E$ with an endpoint in $\mathrm{int} \, e$.

\begin{remark}\label{rmk1}
After the isotopy in Figure \ref{fig9}, the arc $E \cap S$ does not intersect $d$ minimally in $S - K$.
In other words, there is a bigon formed by $(E \cap S) \cup d$ bounding a disk in $S - K$.
But what is needed in our argument is that $(E \cap S) \cup b$ does not make a bigon bounding a disk in $S - K$.
\end{remark}

\begin{figure}[ht!]
\begin{center}
\includegraphics[width=10cm]{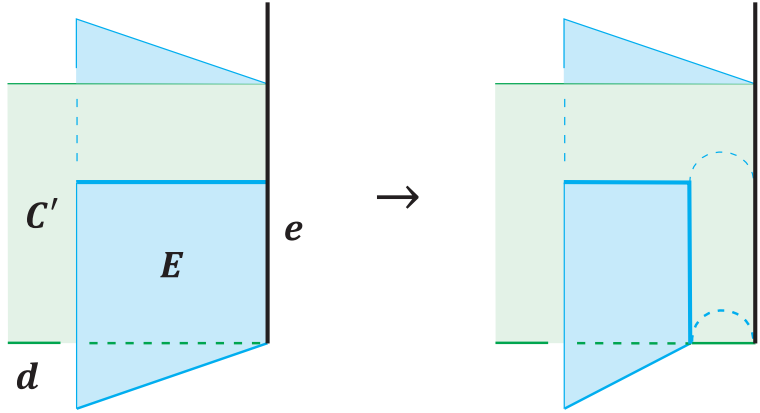}
\caption{An isotopy of $E$}\label{fig9}
\end{center}
\end{figure}

\begin{lemma}\label{lem1}
The intersection $C' \cap E$ consists of arcs, each of which is either
\begin{itemize}
\item the arc $e$, or
\item not boundary parallel in $C'$, or
\item boundary parallel in $C'$ with one endpoint in $b$ and the other in $d$.
\end{itemize}
\end{lemma}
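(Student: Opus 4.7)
The plan is to use minimality of $|C' \cap E|$ together with innermost-arc and disk-swap arguments in the handlebody $W$ to rule out every component of $C' \cap E$ other than the three listed forms. First, I would eliminate circle components: an innermost circle $\sigma$ of $C' \cap E$ in $E$ bounds a sub-disk $E_0 \subset E$ with $E_0 \cap C' = \partial E_0$, and since $\sigma \subset C'$ either bounds a disk in $C'$ or is parallel to a puncture circle, a standard disk-swap in $W$ produces an isotopy of $E$ removing $\sigma$, contradicting minimality.

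For arc components, the isotopy of Figure \ref{fig9} forbids any endpoint in $\mathrm{int}\, e$, and $E \cap K = e$ forbids any endpoint in $\mathrm{int}\, f$. The arc $e$ is shared by $\partial C'$ and $\partial E$ and is itself a component of $C' \cap E$, giving the first bullet. Suppose now $\gamma \neq e$ is an arc of $C' \cap E$ that is boundary parallel in $C'$; take $\gamma$ innermost, cutting off a sub-disk $C_0 \subset C'$ with $\partial C_0 = \gamma \cup \delta$ and $\delta \subset \partial C'$. The endpoints of $\delta$ lie in $\mathrm{int}\, b \cup \mathrm{int}\, d \cup (\text{puncture circles})$, and I would treat the cases:

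\begin{itemize}
\item $\delta \subset b$: In $E$, $\gamma$ is proper with endpoints on $g = E \cap S$, cutting off a disk $E_0 \subset E$ with $\partial E_0 = \gamma \cup \eta$, $\eta \subset g$. Then $C_0 \cup_\gamma E_0 \subset W$ is a disk whose boundary $\delta \cup \eta$ lies in $S - K$, and innermost choice forces $\delta \cup \eta$ to bound a bigon in $S - K$, contradicting Remark \ref{rmk1}.
\item $\delta \subset d$: The analogous disk $C_0 \cup_\gamma E_0 \subset W$ has boundary in $S - K$; if that boundary bounds a disk $\Delta \subset S - K$, then $C_0 \cup_\gamma E_0 \cup \Delta$ is a sphere in the handlebody $W$, hence bounds a ball, along which I would isotope $E$ to eliminate $\gamma$, contradicting minimality.
\item $\delta$ on a puncture circle: The lower cap in $V$ bounded by that puncture is disjoint from $E$, so $C_0 \cup_\gamma E_0$ bounds a region in $W$ across which $E$ can be pushed, again reducing $|C' \cap E|$.
\item $\delta$ spanning corners of the $4$-gon: the restrictions on $\mathrm{int}\, e$ and $\mathrm{int}\, f$ leave only the configuration where $\gamma$ has one endpoint on $b$ and the other on $d$, which is the third bullet.
\end{itemize}

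The main obstacle will be the case $\delta \subset d$, since Remark \ref{rmk1} explicitly permits $(E \cap S) \cup d$ bigons in $S - K$ and so precludes a direct bigon contradiction. The argument must instead produce the concrete ball in $W$ for isotopy and verify, via innermost-ness of $\gamma$, that the supporting ball is disjoint from $e$, so that eliminating $\gamma$ does not reintroduce endpoints into $\mathrm{int}\, e$ and undo the spiral-removal of Figure \ref{fig9}. A secondary obstacle is the puncture-circle case, where one must ensure that the ball used for the isotopy does not meet any lower cap in $V$ other than the one corresponding to $\delta$.
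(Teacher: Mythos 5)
Your treatment of circle components has a genuine gap. You assert that a circle $\sigma$ of $C' \cap E$ ``either bounds a disk in $C'$ or is parallel to a puncture circle,'' but $C'$ is a $k$-punctured disk, so for $k \geq 2$ a circle may encircle several punctures and be neither. More importantly, a circle that is essential in $C'$ cannot be removed by an isotopy or surgery of $E$ at all: it bounds a disk in $E$ but no disk in $C'$, so there is no disk swap on $E$ available and minimality of $|C' \cap E|$ is of no use. The paper disposes of such circles the other way around: it surgers $C'$ (not $E$) along an innermost disk of $E$, producing a replacement for $C'$ with fewer punctures and contradicting the minimality of $k = |R\cap C| - 1$ fixed earlier in Section \ref{sec2}. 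Your argument never invokes this second minimality, and without it the essential circles cannot be eliminated.

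For the arc components your case division is essentially right, but the individual cases are not closed because you reach for the wrong tool. The paper's argument is a one-step outermost disk swap performed entirely with the parallelism disk in $C'$: if $\gamma$ is a boundary-parallel arc of the forbidden type, its outermost disk $\Delta \subset C'$ satisfies $\Delta \cap E = \gamma$ and $\Delta \cap \partial C' \subset S$, so replacing the sub-disk of $E$ not containing $e$ by a push-off of $\Delta$ yields a new bridge disk for $e$ meeting $C'$ fewer times, contradicting minimality of $|C' \cap E|$ --- no reference to bigons in $S-K$ or to balls in $W$ is needed. You instead glue $C_0$ to the sub-disk $E_0$ of $E$ cut off by $\gamma$ and then need the curve $\delta \cup \eta$ to bound a disk in $S - K$ (for the bigon contradiction when $\delta \subset b$, and for the sphere-bounds-ball isotopy when $\delta \subset d$ or $\delta$ lies on a puncture). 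Innermost-ness of $\gamma$ in $C'$ gives no control over $E_0$, which may contain further components of $C' \cap E$, and certainly does not force $\delta \cup \eta$ to bound in $S - K$; you flag this yourself as the main obstacle, and it is not resolved. The fix is to abandon the glued disk and surger $E$ along $\Delta \subset C'$ directly, which is exactly what the paper does.
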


\begin{proof}
Suppose that there is a circle component of $C' \cap E$ which is inessential in $C'$.
(Here, inessential means that it bounds a disk.)
Let $\gamma$ be an innermost such component in $C'$ and let $\Delta$ be the corresponding innermost disk.
Then a disk surgery of $E$ along $\Delta$ reduces $|C' \cap E|$, a contradiction.
Hence all circle components of $C' \cap E$ are essential in $C'$.
Let $\gamma$ be an innermost circle component of $C' \cap E$ in $E$ and
$\Delta$ be the corresponding innermost disk.
By a disk surgery of $C'$ along $\Delta$, we get a disk that replaces $C'$ and has fewer punctures,
which contradicts the minimality of the number of punctures.
So we may assume that $C' \cap E$ consists of arc components.

Suppose that there exists an arc of $C' \cap E$ (except for $e$) such that
the arc is boundary parallel in $C'$ and
it is not the case that one endpoint of the arc is in $b$ and the other endpoint is in $d$.
Then both endpoints are in $b$, or in $d$, or in a puncture.
We choose an outermost such arc $\gamma$ in $C'$ and let $\Delta$ be the corresponding outermost disk.
A surgery of $E$ along $\Delta$ reduces $|C' \cap E|$, a contradiction.
So we conclude that any boundary parallel arc of $C' \cap E$ in $C'$ has one endpoint in $b$ and the other in $d$.
\end{proof}

We give each endpoint of arcs of $C' \cap E$ a label among $\{ b, d, 1, 2, \ldots, k \}$.
Consider a sequence of labels that appears along $E \cap S$.
It begins with $b$ and $1, 2, \ldots, k$ follow by Remark \ref{rmk1} and
because the $k$ punctures in $S$ are nested around $b$.
Then a possibly empty subsequence of $d$'s follows and then $k, \ldots, 1$ follow and then $b$ follows again.
This pattern is repeated and the sequence ends with $d$.
See Figure \ref{fig10} for an appearance of the sequence.
In the figure, $(d)$ means that it can be empty.
We consider a pair of labels of an arc of $C' \cap E$ without order.

\begin{figure}[ht!]
\begin{center}
\includegraphics[width=12.5cm]{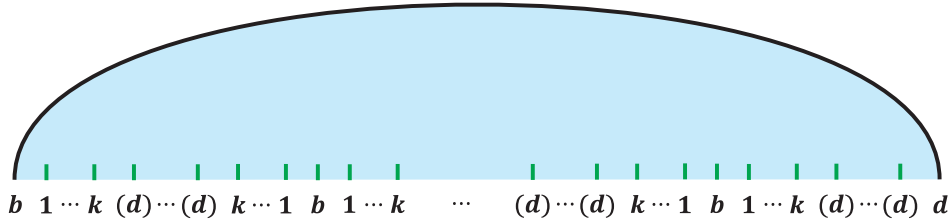}
\caption{A sequence of labels on $E \cap S$}\label{fig10}
\end{center}
\end{figure}

\begin{lemma}\label{lem2}
A pair of labels of an outermost arc of $C' \cap E$ in $E$ is $(k,k)$.
\end{lemma}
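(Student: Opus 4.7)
The plan is by contradiction on the label pair at the endpoints of the outermost arc. Let $\gamma$ be an outermost arc of $C' \cap E$ in $E$ with outermost disk $\Delta \subset E$, where $\partial \Delta = \gamma \cup \delta$ and $\delta$ is a subarc of $d_E$. Because the interior of $\delta$ contains no endpoints of arcs of $C' \cap E$, the two endpoints of $\gamma$ correspond to adjacent labels in the sequence along $d_E$, so the candidate unordered pairs reduce to $\{b,1\}$, $\{i,i+1\}$ for $1 \leq i \leq k-1$, $\{k,d\}$, $\{d,d\}$, and $\{k,k\}$. I aim to rule out the first four.

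For the pairs $\{d,d\}$, $\{i,i+1\}$, and $\{k,d\}$, I would exploit the nested configuration of the puncture circles in $S$ around $b$ to construct a disk $B \subset S - K$ cobounded by $\delta$ and a short path on $\partial C' \cap S$: a subarc of $d$ in the $\{d,d\}$ case; short arcs on two consecutive puncture circles joined by a transverse arc in the $\{i,i+1\}$ case; and a subarc of puncture $k$ joined to a subarc of $d$ by a transverse arc in the $\{k,d\}$ case. Since $\Delta$ is outermost, the glued disk $\Delta \cup B \subset W$ is embedded with interior disjoint from $C'$; a finger-move isotopy of $E$ across $\Delta \cup B$ removes $\gamma$ from $C' \cap E$ without introducing any new intersection arcs, contradicting the minimality of $|C' \cap E|$.

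The pair $\{b,1\}$ is the most delicate, because $b$ and puncture $1$ lie on distinct components of $\partial C'$ and Remark \ref{rmk1} already excludes a bigon in $S - K$ formed from $\delta$ and a subarc of $b$ alone. Here I would instead use $\Delta$ to perform a boundary compression of $C'$ along $\gamma$, merging the outer $4$-gon of $C'$ with puncture $1$ into a single boundary component and yielding a $(k-1)$-punctured disk $C^{\ast}$. Reassembling $C^{\ast}$ with the remaining $k-1$ lower caps in $V$ then produces a new bridge disk for $c$ whose intersection with $R$ has only $k-1$ arcs, contradicting the minimality of $k = |R \cap C| - 1$. With all four non-$\{k,k\}$ alternatives eliminated, the outermost label pair must be $\{k,k\}$. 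The principal technical point is the $\{b,1\}$ case: one must verify that the reassembled $C^{\ast}$ is indeed a valid bridge disk for $c$ with the claimed reduction in the number of $R$-intersections, since the other cases follow cleanly from the standard bigon move once $B$ is constructed.
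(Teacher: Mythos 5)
Your case list and your handling of the pair $(b,1)$ agree with the paper: there the author also boundary-compresses $C'$ along the outermost disk $\Delta$ and checks that the band sum of $D$ with the innermost lower cap is isotopic to $D$, so the reassembled surface is again a bridge disk for $c$ meeting $R$ in fewer arcs, contradicting the minimality of $k$. The gap is in your treatment of $(d,d)$, $(i,i+1)$ and $(k,d)$, where you instead perform a finger move of $E$ across $\Delta \cup B$ for a disk $B \subset S-K$ cobounded by $\delta$ and a path on $\partial C'$. The existence of such a $B$ is exactly what is not guaranteed. Since $\mathrm{int}\,\delta$ is disjoint from $\partial C' \cap S$, in the case $(i,i+1)$ the arc $\delta$ is trapped in the annulus between the consecutive nested puncture circles, which lies in $S-K$, so your $B$ does exist (this is also why the paper can assert in its Case (2) that the merged loop is inessential in $S-K$). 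But in the cases $(k,d)$ and $(d,d)$ the arc $\delta$ lies in the outermost complementary region of $b \cup d \cup (\text{punctures})$ in $S$, and that region contains all the remaining points of $K \cap S$ (the endpoints of the other bridges of $W \cap K$). Nothing prevents $\delta$ from separating those points, in which case every disk in $S$ cobounded by $\delta$ and your path on $\partial C'$ meets $K$; the finger move then drags $E$ across other bridges, the resulting disk is no longer a bridge disk for $e$, and the minimality of $|C' \cap E|$ is not contradicted. So these two cases are not eliminated by your argument.

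The paper sidesteps this by using the same move in all four excluded cases: a boundary compression of $C'$ along $\Delta$, which requires no disk in $S$ and is insensitive to how $\delta$ sits relative to the rest of $K \cap S$. In case $(d,d)$ the arc $\gamma$ is essential in $C'$ by Lemma \ref{lem1}, so the compression strictly decreases the number of punctures; in cases $(i,i+1)$ and $(k,d)$ the punctures are merged into an inessential loop or absorbed into $d$. Every case contradicts the minimality of $k$ rather than of $|C' \cap E|$. To repair your proof, either adopt the boundary compression of $C'$ in the cases $(d,d)$, $(i,i+1)$, $(k,d)$ as well, or supply an argument controlling $E \cap S$ in the outermost region, which you have not done; you should also carry out the verification you flag in the $(b,1)$ case, since it is needed for the contradiction with the minimality of $k$.
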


\begin{proof}
Let $\gamma$ be an outermost arc of $C' \cap E$ in $E$ and
let $\Delta$ be the corresponding outermost disk.
By the above paragraph, the pair of labels of $\partial\gamma$ is one of the following.

\begin{enumerate}
\item $(b,1)$
\item $(i,i+1)$ $(i=1, \ldots, k-1)$
\item $(k,d)$
\item $(d,d)$
\item $(k,k)$ (There is no $d$ between consecutive two $k$'s.)
\end{enumerate}

\vspace{0.2cm}

Case $(1)$ $(b,1)$\\
By a boundary compression of $C'$ along $\Delta$,
the arc $b$ and the puncture with label $1$ is connected into an arc,
which is still isotopic to $b$ in $S - K$.
In $V$, we can see that $D$ and the lower cap disk is connected by a band, yielding a disk isotopic to $D$.
See Figure \ref{fig11}.
Hence there exists a disk with fewer punctures replacing $C'$, a contradiction.

\begin{figure}[ht!]
\begin{center}
\includegraphics[width=7cm]{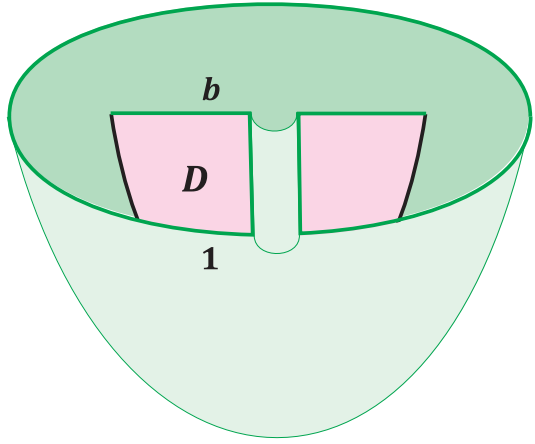}
\caption{A band sum of $D$ and the lower cap disk is isotopic to $D$.}\label{fig11}
\end{center}
\end{figure}

\vspace{0.2cm}

Case $(2)$ $(i,i+1)$ $(i=1, \ldots, k-1)$\\
In this case, a boundary compression of $C'$ along $\Delta$
changes puncture $i$ and $i+1$ into an inessential loop in $S - K$.
So we can reduce the number of punctures by two, a contradiction.

\vspace{0.2cm}

Case $(3)$ $(k,d)$\\
A boundary compression of $C'$ along $\Delta$ connects $d$ and puncture $k$ into an arc.
This also reduces the number of punctures, a contradiction.

\vspace{0.2cm}

Case $(4)$ $(d,d)$\\
Since $\gamma$ is not a boundary parallel arc in $C'$,
a boundary compression of $C'$ along $\Delta$ reduces the number of punctures, a contradiction.

\vspace{0.2cm}

So the pair of labels of $\partial\gamma$ should be the remaining Case $(5)$.
An appearance of the sequence of labels is as in Figure \ref{fig12}.
\end{proof}

\begin{figure}[ht!]
\begin{center}
\includegraphics[width=10.5cm]{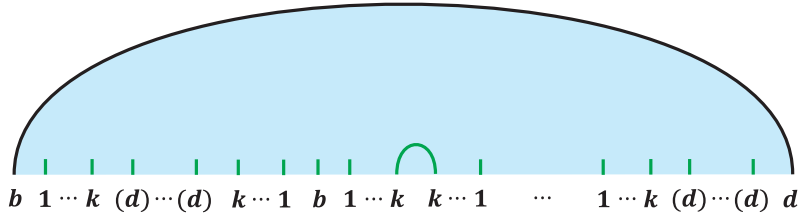}
\caption{A sequence of labels on $E \cap S$}\label{fig12}
\end{center}
\end{figure}

\begin{lemma}\label{lem3}
$k = 0$.
\end{lemma}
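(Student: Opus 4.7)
The plan is to argue by contradiction, assuming $k > 0$, and to derive a contradiction with the minimality of $k$ via a boundary compression along the outermost disk $\Delta$ supplied by Lemma \ref{lem2}. Fix an outermost arc $\gamma$ of $C' \cap E$ in $E$ with label pair $(k,k)$, and let $\Delta \subset E$ be the outermost disk with $\partial \Delta = \gamma \cup \delta$, where $\delta$ is a subarc of $E \cap S$.

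First, I would exploit the label information to describe $\delta$ in $S$. Because the two $k$-labels of $\partial\gamma$ are consecutive in the sequence along $E \cap S$ with no intervening label (by case $(5)$ of Lemma \ref{lem2} and the pattern shown in Figure \ref{fig12}), the interior of $\delta$ is disjoint from $b$, from $d$, and from every puncture loop $\ell_1, \ldots, \ell_k$ in $S$. Using the nested positioning of $\ell_1, \ldots, \ell_k$ around $b$ (with $\ell_k$ outermost), I would identify a subarc $\eta$ of $\ell_k$ so that $\delta \cup \eta$ bounds an embedded disk $\Sigma \subset S$ whose interior is disjoint from $K$, from $b$, and from $\ell_1, \ldots, \ell_{k-1}$.

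Next, I would perform the boundary compression of $C'$ along $\Delta$. By Lemma \ref{lem1}, $\gamma$ is not boundary parallel in $C'$ (its endpoints lie on the puncture $\ell_k$, not on $b$ and $d$), so the surgery is non-trivial: it cuts $C'$ along $\gamma$, attaches two copies of $\Delta$, and replaces the loop $\ell_k$ in $\partial C'$ by two new loops $\delta \cup \eta$ and $\delta \cup \eta'$, where $\eta, \eta'$ are the two subarcs of $\ell_k$ cut off by $\partial\delta$. The loop $\delta \cup \eta$ bounds the disk $\Sigma$ in $S - K$; pushing $\Sigma$ slightly into $V$ yields a disk in $V$ whose boundary is $\delta \cup \eta$ and which is parallel to a subdisk of the original lower cap for $\ell_k$, so it serves as a valid lower cap for $D$. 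Combining the compressed surface with this new lower cap, the original lower caps for the $\ell_i$ that survive, and $D$ itself, I assemble a new disk $\widetilde C$ in $M$ containing $D$ as a subdisk whose trace $\widetilde C \cap W$ has strictly fewer puncture loops than $C'$.

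Redoing the reduction operation starting from $\widetilde C$ then produces a bridge disk for the new bridge $c$ whose intersection with $R$ consists of strictly fewer than $k$ type-$(6)$ arcs, contradicting the minimality of $k$. Hence $k = 0$. The main obstacle is the geometric step of locating the disk $\Sigma$ and verifying that its push-in into $V$ really is a lower cap for $D$: this requires a careful use of the outermost-disk property of $\Delta$, the earlier isotopies of $E$ from Figure \ref{fig9} (together with Remark \ref{rmk1}), and the nested arrangement of the loops $\ell_i$ around $b$, in order to rule out hidden intersections with $K$ or with the other lower caps sitting in $V$.
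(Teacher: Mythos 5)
Your route is genuinely different from the paper's, and the step on which it entirely rests --- that $\delta \cup \eta$ bounds an embedded disk $\Sigma$ in $S$ whose interior is disjoint from $K$ --- is precisely the claim that cannot be justified. It is also exactly why the paper does not dispose of the label pair $(k,k)$ inside Lemma \ref{lem2} by boundary compression, the way it disposes of cases $(1)$--$(4)$. Because the two $k$-labels of $\partial\gamma$ are consecutive with no intervening $d$, the subarc $\delta$ of $E\cap S$ lies in the region $O$ of $S$ exterior to the outermost puncture loop $\ell_k$. That region contains the arc $d$ and the $2n-2$ points of $K\cap S$ other than $\partial b$, and $\delta$ cuts $O$ into two disks, one of which contains $d$ and hence at least the two points $\partial d\subset K$. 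Nothing in the minimality of $|C'\cap E|$, the outermost property of $\Delta$, or the nesting of $\ell_1,\dots,\ell_k$ prevents the other disk from containing the feet of further bridges of $K$: the arc $E\cap S$ is only required to avoid $K\cap S$ away from $\partial e$, and it may perfectly well encircle the endpoints of another bridge before returning to $\ell_k$. If it encircles, say, both endpoints of one bridge of $V\cap K$, then $\delta\cup\eta$ is not even null-homotopic in $V-K$ (in the model case it is a longitude of the solid torus obtained by drilling that bridge out of $V$), so no lower cap for the new boundary loop exists and your surface $\widetilde C$ cannot be assembled. Your last sentence flags this as ``the main obstacle,'' but the tools you cite do not overcome it; your argument is sound only when that exterior region happens to contain no other intersection points of $K$ with $S$ (e.g.\ when $n=2$). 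The surrounding bookkeeping, by contrast, is fine: Lemma \ref{lem1} does guarantee that the piece cut off by $\gamma$ carries at least one old puncture, so the count would drop if the cap existed.

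The paper avoids this issue entirely and never compresses along a $(k,k)$ outermost disk. It uses Lemma \ref{lem2} only to pin down the label pattern, then chooses \emph{two} outermost arcs $\alpha,\beta$ of $C'\cap E$ whose endpoints are consecutive along $E\cap S$; the pattern forces every label $1,\dots,k-1$ to appear between $\partial\alpha$ and $\partial\beta$, so for each $i$ there is an arc $\gamma_i$ with both endpoints on puncture $i$. Capping the punctures of $C'$ to a disk $\overline C$ and taking an innermost loop $\Delta_j\cup\gamma_j$ exhibits some $\gamma_j$ that is boundary parallel in $C'$ with both endpoints on a puncture, contradicting Lemma \ref{lem1}. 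To rescue a compression-style argument you would have to control where $E\cap S$ travels in the exterior region, which is essentially the original difficulty in disguise.
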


\begin{proof}
Consider only all outermost arcs of $C' \cap E$ in $E$.
It is easy to see that there are more than one outermost arcs.
We choose two outermost arcs $\alpha$ and $\beta$ whose endpoints are consecutive along $E \cap S$
such that every other arc with one of its endpoints between $\partial\alpha$ and $\partial\beta$
are parallel to $\alpha$ or $\beta$ as in Figure \ref{fig13}.
By the pattern of the sequence of labels,
all labels $1, \ldots, k-1$ appear between $\partial\alpha$ and $\partial\beta$.
So an arc $\gamma_i$ with a pair of boundary labels $(i,i)$ exists for all $i$ $(i=1, \ldots, k)$.
If we cap off each puncture $i$ of $C'$ with a disk $\Delta_i$, we get a disk $\overline{C}$ from $C'$.
Then $\Delta_i \cup \gamma_i$ can be regarded as a loop with fat vertex in $\overline{C}$.
Consider an innermost one $\Delta_j \cup \gamma_j$ in $\overline{C}$.
Then $\gamma_j$ cuts off a disk from $C'$ and this contradicts Lemma \ref{lem1} that
an arc with label $(i,i)$ is not bounday parallel in $C'$.
The contradiction is caused by the assumption that $k > 0$.
So we conclude that $k = 0$ and $C'$ is a disk.
\end{proof}

\begin{figure}[ht!]
\begin{center}
\includegraphics[width=9cm]{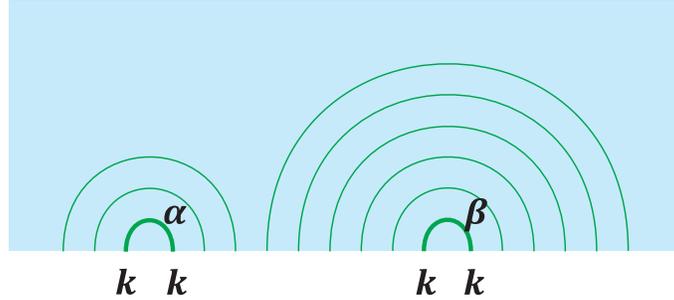}
\caption{Two outermost arcs $\alpha$ and $\beta$ in $E$}\label{fig13}
\end{center}
\end{figure}

By Lemma \ref{lem3}, $C'$ is a disk.
Again we choose $E$ such that $|C' \cap E|$ is minimal.
Suppose that $|C' \cap E| > 0$.
If $E$ spirals with respect to $C'$, 
then we isotope an endpoint of an arc of $C' \cap E$ in $\mathrm{int} \, e$
into $\mathrm{int} \, b$ or $\mathrm{int} \, d$.
By similar argument as in the proof of Lemma \ref{lem1}, $C' \cap E$ (except for $e$) consists of arcs,
each with one endpoint in $b$ and the other in $d$. 
See Figure \ref{fig14}.
Let $\gamma$ be an outermost arc of $C' \cap E$ in $C'$ cutting off an outermost disk $\Delta$ containing $e$.
Let $E_1$ and $E_2$ be the two subdisks of $E$ cut off by $\gamma$ with $e \subset E_1$.
Then $\Delta \cup E_2$ is a new bridge disk for $e$
with $|C' \cap (\Delta \cup E_2)| < |C' \cap E|$, a contradiction.
Therefore $C' \cap E =e$ and this implies that $D \cap E$ is a single point.
This completes the proof of Theorem \ref{thm1}.

\begin{figure}[ht!]
\begin{center}
\includegraphics[width=7.5cm]{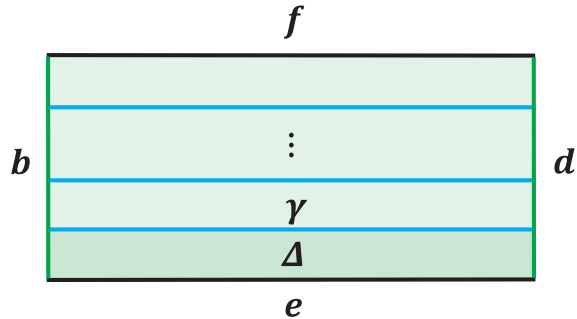}
\caption{The arcs of $C' \cap E$ in $C'$}\label{fig14}
\end{center}
\end{figure}

\section{Proof of Theorem \ref{thm2}}\label{sec3}

By Theorem \ref{thm1}, there exists a bridge disk $E$ such that $(D, E)$ is a cancelling pair.
Let $a = D \cap K$ and $b = E \cap K$ and $p = D \cap E$.
We simultaneously isotope $a$ and $b$ along $D$ and $E$ respectively and
further slightly into $W$ and $V$ respectively, fixing the point $p$.
Let $a'$ be the resulting arc of $a$ after the isotopy and
let $\Delta_1$ be the region that $a$ swept out.
Then $\Delta_1$ contains $D$.
Similarly, let $b'$ be the resulting arc of $b$ after the isotopy and
let $\Delta_2$ be the region that $b$ swept out.

The unknot $K$ is in $(n-1)$-bridge position now and bounds a disk $F_0$ that intersects $S$ in $n-1$ arcs.
The disk $F_0$ and $\Delta_1 \cup \Delta_2$ have $a'$ and $b'$ in common and
they may have other intersection.
If $F_0$ spirals with respect to $\Delta_1$ (or $\Delta_2$) around the axis $a'$ (or $b'$) respectively,
then we isotope $F_0$ as before so that
there is no arc component of $(\Delta_1 \cup \Delta_2) \cap F_0$ with an endpoint in
$\mathrm{int} \, a'$ or $\mathrm{int} \, b'$.
We move all circle components and arc components (except for $a'$ and $b'$)
of $(\Delta_1 \cup \Delta_2) \cap F_0$ horizontally to the outside of $\Delta_1 \cup \Delta_2$
so that the property of $F_0 \cap S$ being $n-1$ arcs is preserved. 
See Figure \ref{fig15}.

\begin{figure}[ht!]
\begin{center}
\includegraphics[width=10cm]{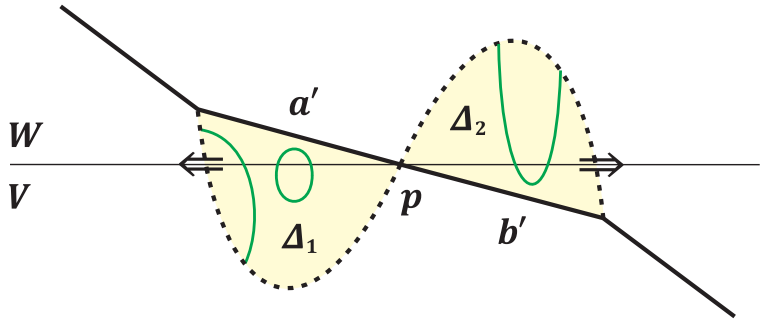}
\caption{Horizontal isotopy of $F_0$}\label{fig15}
\end{center}
\end{figure}

Now we give a perturbation to $K$ using $\Delta_1$ and $\Delta_2$.
The boundary of $F = F_0 \cup \Delta_1 \cup \Delta_2$ is the original $K$ in $n$-bridge position.
But $F \cap S$ is not a collection of $n$ arcs.
We slightly push a part of $F$ in $V$ into $W$ as in Figure \ref{fig16} so that
$F \cap S$ is a collection of $n$ arcs and $F$ still contains $D$ as a subdisk.

\begin{figure}[ht!]
\begin{center}
\includegraphics[width=8cm]{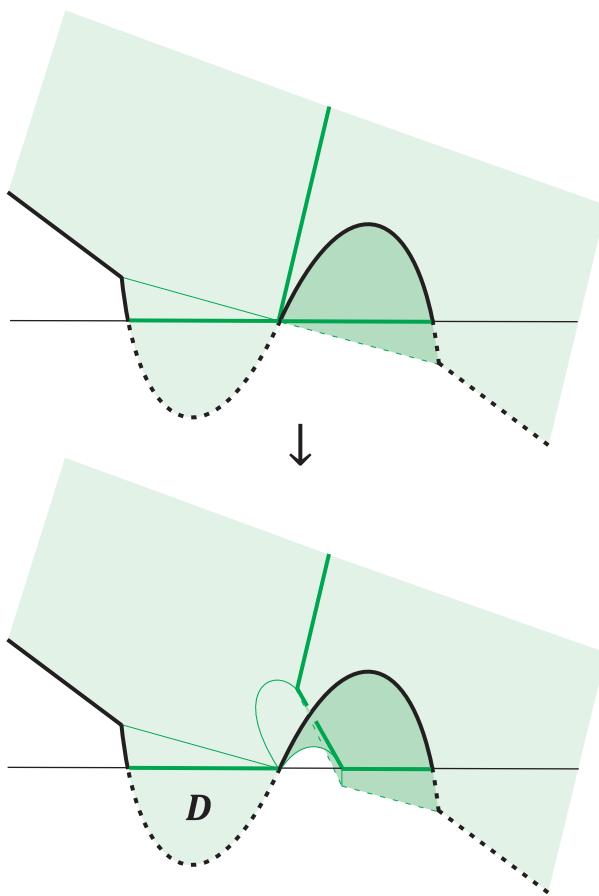}
\caption{An isotopy of $F$}\label{fig16}
\end{center}
\end{figure}

\vspace{0.2cm}

\noindent {\bf Acknowledgements.}
The author would like to thank Toshio Saito for helpful discussion.

\end{document}